\newtheorem{thm}{Theorem}[section]
\newtheorem{lemma}[thm]{Lemma}
\newtheorem{rem}[thm]{Remark}
\theoremstyle{definition}
\newtheorem{example}[thm]{Example}
\numberwithin{equation}{section}
\newcommand{\R}{\mathbb{R}}
\newcommand{\RR}{\mathbb{R}}
\newcommand{\um}{\hat u_\text{\tiny{MAP}}}
\newcommand{\phm}{\hat p_\text{\tiny{MAP}}}
\newcommand{\phc}{\hat p_\text{\tiny{CM}}}
\newcommand{\qhm}{\hat q_\text{\tiny{MAP}}}
\newcommand{\qhc}{\hat q_\text{\tiny{CM}}}
\newcommand{\uc}{\hat u_{\text{\tiny{CM}}}}
\DeclareMathOperator*{\argmin}{argmin \,}
\DeclareMathOperator*{\argmax}{argmax \,}
\newcommand{\dd}{\text{d}}
\begin{document}
\title{Bregman Cost for Non-Gaussian Noise}
\author{Martin Burger \thanks{Institut f\"ur Numerische und Angewandte Mathematik, Westf\"alische Wilhelms-Universit\"at M\"unster, Einsteinstr. 62, 48149 M\"unster, Germany (martin.burger@wwu.de). The work of this author was supported by by ERC via Grant EU FP 7—ERC Consolidator Grant 615216 LifeInverse}
\and Yiqiu Dong \thanks{Department of Applied Mathematics and Computer Science, Technical University of Denmark, 2800 Kgs. Lyngby, Denmark (yido@dtu.dk). The work of this author was supported by Advanced Grant 291405 from the European Research Council.}
\and Federica Sciacchitano \thanks{Department of Applied Mathematics and Computer Science, Technical University of Denmark, 2800 Kgs. Lyngby, Denmark (feds@dtu.dk)}}

\date{}

\maketitle
\textbf{\large{Abstract}}
One of the tasks of the Bayesian inverse problem is to find a good estimate based on the posterior probability density. The most common point estimators are the   conditional mean (CM) and maximum a posteriori (MAP) estimates, which correspond to the mean and the mode of the posterior, respectively. From a theoretical point of view it has been argued that the MAP estimate is only in an asymptotic sense a  Bayes estimator for the uniform cost function, while the CM estimate is a Bayes estimator for the means squared cost function. Recently, it has been proven that the MAP estimate is a proper Bayes estimator for the Bregman cost if the image is corrupted by Gaussian noise. In this work we extend this result to other noise models with log-concave likelihood density, by introducing two related Bregman cost functions for which the CM and the MAP estimates are proper Bayes estimators. Moreover, we also prove that the CM estimate outperforms the MAP estimate, when the error is measured in a certain Bregman distance, a result previously unknown also in the case of additive Gaussian noise.

\section{Introduction}
Bayesian estimation theory deals with the determination of the best estimate of an unknown vector from a related observation. 
In particular, here, we are interested in recovering the original unknown $u\in \RR^n$ from the knowledge of an indirect measurement $f\in \RR^m$ and  the following forward degradation model
\begin{equation}\label{eq:forward}
f=Ku \odot \eta,
\end{equation}
with $K\in \RR^{m\times n}$ being the linear forward operator and
$\eta \in \RR^m$ represents the noise, where $\odot$ might be a multiplication or a sum or even a more complicated operation.  In literature, for simplicity most of the work focused on the additive white Gaussian noise, i.e. when $\eta$ follows the Gaussian distribution, $\mathcal{N}(0,\Sigma)$, with mean $0$ and covariance matrix $\Sigma$ (often a multiple of the identity), see for instance \cite{rof,weiss,Tony2005bookJack}. However, in many real applications, the noise can be much more complicated than additive white Gaussian noise. For example, it might be impulsive \cite{nik, bi:sci, yang}, signal dependent \cite{figue, bi:fo, bi:gu, bi:le}, multiplicative \cite{aa, dz,rlo}, or even mixed \cite{bi:ca,bi:lu, bi:la}. In this paper, we consider a general case, where the noise is just an unknown
realization of a known random noise process. 

Due to the ill-posedness of the inverse problem defined in \eqref{eq:forward}, the simple matrix inversion does not lead to any meaningful solution.
Thus, to recover the original image we can employ the Bayesian approach that combines prior information on $u$ and the forward model in \eqref{eq:forward} to reconstruct the image. By the Bayes' rule \cite{gri}, we have
$$p(u|f)=\frac{p(f|u)p(u)}{p(f)},$$
where $p(u|f)$ is called posterior density and represents the conditional probability density of $u$ given the noisy image $f$, $p(f|u)$ is the likelihood density and it encodes the likelihood that the data $f$ is due to the image $u$, $p(u)$ is the prior density and it describes the properties of the image that we would like to recover, and $p(f)$ is a normalization factor independent of $u$. 
The Bayesian inference deals with extracting the useful information from the posterior to recover the sharp and noise-free image. In particular, it intends to find the best estimate by using the probability of the unknown $u$ given the known observations $f$ and to quantify the uncertainty of the estimate.

A common prior density is the so-called log-concave Gibbs distribution \cite{bov} (using $\propto$ for equality up to a normalization factor)
$$p(u) \propto \exp(-\alpha R(u)),$$
where $R:\RR^n \rightarrow \RR$ is a convex functional and $\alpha>0$ is a scaling parameter, which is often called as regularization parameter. Two popular examples of Gibbs distribution are the Tikhonov regularization and the total variation (TV), see \cite{bi:se} and the references therein. 

The likelihood density depends on the forward degradation model and the noise model. The most common way to write is 
$$p(f|u) \propto \exp(-E(u; K, f)),$$
where $E$ is called data fidelity term and it represents the good fit with the data based on the forward model. In this paper, we assume that $E$ is convex with respect to $u$. For instance, if the original image is corrupted by Poisson noise \cite{figue, bi:fo, bi:le}, based on Poisson distribution $E$ can be written as 
\begin{equation}\label{eq:po}
E(u; K, f) = \sum_{i=1}^{n} \left[ (Ku)_i- f_i \log (Ku)_i + \log(f_i!)\right]
\end{equation}
with $Ku\geq0$.

Thus, based on the Bayes' rule the posterior can be rewritten as 
\begin{equation}\label{post}
	p(u|f) \propto \exp(-E(u; K, f) - \alpha R(u)),
\end{equation}
up to some terms independent of $u$. Now, the question is how to obtain a suitable estimate of the unknown by using the information in the posterior \eqref{post}. 
In the Bayesian inversion approach (cf. \cite{bi:ka,stuart}), there are two popular estimates: the maximum a posteriori (MAP) estimate, $\um$, and the conditional mean (CM) estimate, $\uc$. They are defined as (cf. \cite{dashti,bi:he} for infinite-dimensional versions)
\begin{equation}\label{eq:um_uc}
\begin{aligned}
\um&=\argmax_{u \in \mathbb{R}^n} p(u|f)=\argmin_{u \in \mathbb{R}^n} E(u; K, f) + \alpha R(u),\\ 
\uc&= \mathbb{E}[u|f]=\int u\, p(u|f) \ \dd u.\end{aligned}
\end{equation}
According to their definitions, the MAP estimate corresponds to find the mode of the posterior, while the CM estimate corresponds to compute the expected value of the posterior. Of course the quality of the different estimates as representations of the posterior distribution is an important question.

Computing the MAP estimate leads to solving a high-dimensional optimization problem and the CM estimate leads to solving a high-dimensional integration problem. From the numerical point of view, the MAP estimate can be computed rather efficiently, see for instance \cite{bi:se}, while the CM estimate requires to solve a much harder and more time-consuming integration problem. To calculate the CM estimate the classical techniques of numerical quadrature seem prohibitive in high-dimension, hence Monte Carlo methods  or special sparsity techniques \cite{schwabstuart} have to be employed. Further, drawing samples from the posterior is often not straight-forward, so the Markov chain Monte Carlo (MCMC) techniques need be used, for an overview see \cite{bi:ka}. Although there are computational challenges to calculate the CM estimate, it has many theoretical benefits. Comparing with the MAP estimate, the CM estimate is a more intuitive choice, since it represents the average of the samples. Moreover, from the theoretical point of view, the CM estimate is the Bayes estimator for the mean squared error cost, while the MAP estimate is only asymptotically the Bayes estimator for the uniform cost function. Recently, in \cite{burlu}, in the Gaussian noise case, it has been shown that the MAP estimate is a Bayes estimator for the cost function given by an $L^2$ term and a Bregman distance. More details will be given in Section \ref{Sec:MAPCM}. In \cite{bi:he}, the results in \cite{burlu} are extended to the infinite-dimensional setting. 

The main novelty of this paper is to study from the Bayesian cost point of view the CM and MAP estimator in non-Gaussian noise cases. In this paper, we provide several cost functions for which the MAP and CM estimate are Bayes estimator not only under the Gaussian noise model, but under more general noise models. The only assumption that we need is the convexity and Lipschitz-continuity of $E$. In addition, we will show that under some assumptions the CM estimate outperforms the MAP estimate in an appropriate error measure. 
 
The remainder of the paper is organized as follows. In Section \ref{Sec:MAPCM}  we analyse the difference between the MAP estimate and the CM estimate, and give an overview of the Bayesian approach. In Section \ref{Sec:MAP} we provide a cost function for which the MAP estimate is Bayes estimator. In Section \ref{Sec:CM} we study the optimality condition for the CM estimate, and give some suitable cost functions for  the CM estimate. In Section \ref{Sec:comparison}, we compare the two estimates by proving that the CM estimate outperforms the MAP estimate when the error is measured using a cost function that depends on the Bregman distance.  The conclusion are drawn in Section \ref{Sec:con}.

\section{Review of Bayes Cost Formalism}\label{Sec:MAPCM} 

%
%
%

One main focus of Bayesian technique is the determination of the best estimate of an unknown data. For instance, in the inverse problem defined in \eqref{eq:forward} it would be to find the best estimation of the original image $u$, which is corrupted by blur and noise. 
The Bayesian estimation of $u$ from the given noisy image $f$ relies on the minimization of a Bayes cost, which is defined as follows
$$\begin{aligned} 
BC_C(\hat u):=& \mathbb{E}[C(u,\hat u)]\\
=&\int \int C(u,\hat u)p(u,f) \ \dd u\  \dd f \\ 
=& \int \int C(u,\hat u)p(u|f)\  \dd u \ p(f) \  \dd f,
\end{aligned}$$
where $C:\mathbb{R}^n\times \mathbb{R}^n \to \mathbb{R}$ is a cost functional measuring the distance between $u$ and $\hat u$. A Bayes estimator $\hat u_C$ is the estimator minimizing the Bayes cost function $BC_C(\hat u)$, that is
\begin{equation}\label{eq:be}
\hat u_C:= \argmin_{\hat u} BC_C(\hat u).
\end{equation}
Since $\hat{u}$ only depends on $f$ and the marginal density $p(f)$ is non-negative, the Bayes estimator can be also computed by
$$\hat u_C:= \argmin_{\hat u} \int C(u,\hat u(f))p(u|f)\ \dd u.$$
Therefore, the Bayes estimator is always corresponding to certain cost functions, and it's very important to find a suitable cost function. 

One of the most common choice for the cost function is the mean squared error, i.e.
\begin{equation}\label{ep:mse}
C(u,\hat u)=\|u-\hat u\|_2^2,
\end{equation}
and the conditional mean estimate $\uc$ is the corresponding Bayes estimator. 
Another popular choice for the cost function is the uniform cost, i.e.
$$C(u,\hat u)=
\begin{cases}
0, &\quad |u_k -\hat u_k|<\epsilon \quad \text{for} \; 1\leq k \leq n, \\
1, &\quad \text{otherwise},
\end{cases} $$
where $\epsilon>0$ is a small constant. It turns out that the MAP estimate $\um$ is an asymptotic Bayes estimator for this cost function.

Although it seems intuitively optimal to use the squared Euclidean norm, i.e. variance, as a Bayes cost functional, there is no real justification in high-dimensional version with a non-Gaussian prior. Assume e.g. that $R$ is some power of a norm different from the Euclidean one (e.g. the popular $\ell^1$- or total variation norm), then effectively $R$ induces the relevant Banach space geometry on $\RR^n$. For increasing $n$ this geometry is very different from the Euclidean one for large $n$ and in a limit $n \rightarrow \infty$ one might even converge to a Banach space setting where no equivalent of the Euclidean norm exists, hence the standard variance becomes questionable. Hence, in such a setting different cost functionals better adapted to the structure of induced by $R$ shall be benefitial, both for characterizing the MAP and CM estimate.

Recently, in \cite{burlu} it has been shown that the MAP estimate is a Bayes estimator for 
$$C(u,\hat u)=\|K (\hat u-u)\|_2^2 +2\alpha D_R^q(\hat u, u),$$
where $K$ is the blurring operator, $\alpha>0$ is a regularization parameter, and $D_R^{q}(\hat{u}, u)$ represents the Bregman distance between $\hat{u}$ and $u$ for a convex regularization functional $R$ and a subgradient $q \in \partial R(u)$, which is defined as
$$D_R^q(\hat u, u)= R(\hat u) -R(u) - \langle q, \hat u -u\rangle. $$
If $R$ is Fr\'echet differentiable in $u$, then the subgradient $q$ corresponds to the standard Fr\'echet derivative $R'$. In this paper, we refer to the Bregman distance by omitting $q$, i.e. $D_R(\hat u, u)$. The Bregman distance has been introduced in \cite{bi:be} and it is a very useful tool in image processing, see for instance \cite{bi:bu, bi:bur, bi:go}. Since it is not symmetric and the triangle inequality does not hold, it is not a distance in the mathematical sense. But some nice properties hold, such as
 \begin{itemize}
 \item $D_R(\hat u, u)\geq 0$.
 \item If $R$ is strictly convex, $D_R(\hat u, u)=0$ implies $\hat u=u$.
 \item $D_R(\hat u, u)$ is convex in $\hat u$.
 \end{itemize}

In \cite{burlu}, all the comparison of the MAP and CM esitmates are under the additive Gaussian noise model, i.e., $E(u; K, f)=\|Ku-f\|_{2}^{2}$, and it has been proven that the MAP estimate is a proper Bayes estimator in this case. In this paper, we will discuss and compare the MAP and CM esitmates under more general noise models. The main assumption we need is that the considered noise model leads to a convex data fitting term $E(u; K, f)$. More precisely we shall assume that the functionals $u \mapsto E(u;K,f)$ and $R$ are nonnegative, convex and Lipschitz-continuous on $\R^n$ without further notice. Moreover, we assume that the posterior is well specified by \eqref{post}, i.e. 
$$  \int_{\RR^n} \exp(-E(u; K, f) - \alpha R(u)) ~du < \infty . $$

 \section{Cost Function for the MAP Estimate}\label{Sec:MAP}
 
To propose a cost function for the MAP estimate, we first show that the posterior distribution in \eqref{post} can be rewritten in a MAP-centred form by using the optimality condition of the MAP estimate. 

Since the MAP estimate $\um \in \mathbb{R}^n$ is a maximizer of the posterior defined in \eqref{post}, it satisfies the optimality condition 
\begin{equation}\label{optcond} 
K^\top \qhm + \alpha \phm = 0,
\end{equation}
where $\qhm \in \partial_u E(\um; K, f)$ and $\phm \in \partial R(\um)$. Then, we can obtain the following result.

 \begin{lemma}\label{lemmapost}
 The posterior in \eqref{post} can be rewritten in a MAP-centred form
\begin{equation}\label{newpost}
	p(u|f)\propto 	\exp(-D_E^{\qhm}(u, \um)  - \alpha D_R^{\phm}(u,\um) ),
\end{equation}
where $D_E^{\qhm}(u,\um)$ (resp. $ D_R^{\phm}(u,\um)$) indicates the Bregman distance between $u$ and $\um$.
\end{lemma}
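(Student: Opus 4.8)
The plan is to reduce the statement to a pointwise algebraic identity between the two exponents. Because \eqref{newpost} is asserted only up to the normalization factor (the symbol $\propto$), it suffices to show
$$E(u; K, f) + \alpha R(u) = D_E^{\qhm}(u, \um) + \alpha D_R^{\phm}(u, \um) + c$$
for some constant $c$ independent of $u$. Exponentiating this identity and absorbing $e^{-c}$, together with the original hidden normalization, into the proportionality sign then yields exactly \eqref{newpost}, so no probabilistic argument is needed beyond this.

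To obtain the identity I would simply expand both Bregman distances from their definition. Since the fidelity $E$ depends on $u$ only through $Ku$, its expansion reads
$$D_E^{\qhm}(u, \um) = E(u; K, f) - E(\um; K, f) - \langle \qhm,\, K(u - \um)\rangle,$$
while
$$D_R^{\phm}(u, \um) = R(u) - R(\um) - \langle \phm,\, u - \um\rangle.$$
Forming the weighted sum $D_E^{\qhm}(u,\um) + \alpha D_R^{\phm}(u,\um)$, the leading terms $E(u;K,f)$ and $\alpha R(u)$ reproduce the original exponent, the terms $-E(\um;K,f) - \alpha R(\um)$ collect into the $u$-independent constant $c$, and the two subgradient terms combine into the single inner product $-\langle K^\top \qhm + \alpha \phm,\, u - \um\rangle$.

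The only real content of the argument is the cancellation of this last inner product, and here I would invoke the optimality condition \eqref{optcond}, namely $K^\top \qhm + \alpha \phm = 0$, which makes it vanish identically in $u$. The point to watch is the placement of the operator $K$: because $E$ acts through $Ku$, the linear term of $D_E^{\qhm}$ pairs $\qhm$ with $K(u-\um)$, i.e. with $\langle K^\top\qhm, u-\um\rangle$, which is precisely why the optimality condition is written with $K^\top\qhm$ rather than $\qhm$ alone. Keeping these conventions consistent is exactly what lets the two linear terms assemble into $K^\top\qhm + \alpha\phm$ and cancel.

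I do not expect any genuine obstacle beyond this bookkeeping, as the lemma is essentially a rewriting of the convex objective around its minimizer. Existence of the MAP point $\um$ and of the subgradients $\qhm, \phm$ satisfying \eqref{optcond} is already granted by the standing hypotheses (convexity and Lipschitz continuity of $E$ and $R$ together with the well-specified, integrable posterior), so with $c = -E(\um; K, f) - \alpha R(\um)$ the identity holds and the claim follows at once.
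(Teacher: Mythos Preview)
Your proposal is correct and follows essentially the same route as the paper: both expand the two Bregman distances, collect the $u$-independent pieces into the proportionality constant, and invoke the optimality condition \eqref{optcond} to make the linear term $\langle K^\top\qhm+\alpha\phm,\,u-\um\rangle$ vanish. The only cosmetic difference is that the paper starts from $\exp(-D_E-\alpha D_R)$ and works back to $p(u|f)$, while you start from the exponent of $p(u|f)$ and work forward, but the algebraic identity verified is the same.
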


\begin{proof}

First, we would like to point out that we are allowed to ignore the terms independent of $u$, since the minimization problem in \eqref{eq:um_uc} is only on $u$. Then, based on the definition of Bregman distance, we have
\begin{equation}\label{pr} 
\begin{aligned}
\exp(-&D_E(u, \um)  - \alpha D_R(u,\um) ) \\
&\propto  \exp(- E(u; K, f)+\langle \qhm, Ku - K \um \rangle  - \alpha R(u) + \alpha \langle \phm, u- \um \rangle) \\
&= \exp(- E(u; K, f) - \alpha R(u) +  \langle  K^\top \qhm + \alpha\phm, u- \um \rangle) \\
&=\exp(- E(u; K, f) - \alpha R(u) )\\
&\propto p(u|f),
\end{aligned}
\end{equation} 
where $\qhm\in\partial E(\um; K,f)$, $\phm\in\partial R(\um)$, and the last equality follows directly from the optimality condition in \eqref{optcond}.
\end{proof}

Now, we suggest a cost function for which the MAP estimate is a Bayes estimator. For simplicity of notation we omit writing the subgradient as a superscript in the Bregman distance, since due to the Lipschitz-continuity of the involved functionals the subgradient contains a single element almost everywhere.

\begin{thm} \label{costmap}
Under the decay assumption 
\begin{equation}\label{eq:ass}
\lim_{r\to \infty} \int_{\partial\mathcal{B}_r(0)} p(u|f) \dd s=0,
\end{equation} 
for the posterior defined by \eqref{post}, the MAP estimate minimizes the Bayes cost with cost functional
\begin{equation}\label{cost}
 C(\hat u, u) = D_E(\hat u,u) + \alpha  D_R(\hat u,u).
\end{equation}
\end{thm}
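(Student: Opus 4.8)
The plan is to use the standard reduction of Bayes-cost minimization to a pointwise problem in $f$, and then to extract the full force of the decay assumption \eqref{eq:ass} through a divergence-theorem identity for the posterior-expected subgradient. First, since $p(f) \ge 0$ and the estimator $\hat u$ depends only on $f$, minimizing $BC_C(\hat u)$ is equivalent to minimizing, for each fixed $f$, the inner integral
$$ g(\hat u) := \int_{\R^n} C(\hat u, u)\, p(u|f)\,\dd u = \int_{\R^n}\big(D_E(\hat u,u) + \alpha D_R(\hat u,u)\big)\, p(u|f)\,\dd u, $$
so it suffices to show that $\um$ minimizes $g$. Throughout I would fix measurable selections $\xi(u) \in \partial_u E(u;K,f)$ and $\zeta(u) \in \partial R(u)$, which exist and are a.e. uniquely determined by the Lipschitz-continuity of $E$ and $R$ (Rademacher).

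Next I would insert the definitions $D_E(\hat u,u) = E(\hat u) - E(u) - \langle \xi(u), \hat u - u\rangle$ and $D_R(\hat u,u) = R(\hat u) - R(u) - \langle \zeta(u), \hat u - u\rangle$ and integrate term by term against $p(u|f)$, using $\int p(u|f)\,\dd u = 1$. Collecting the terms that actually depend on $\hat u$, and absorbing the rest into a constant $c(f)$, this gives
$$ g(\hat u) = E(\hat u) + \alpha R(\hat u) - \Big\langle \textstyle\int_{\R^n} \big(\xi(u) + \alpha\,\zeta(u)\big)\, p(u|f)\,\dd u,\ \hat u \Big\rangle + c(f). $$
Here I would first check that all integrals are finite, which follows from the nonnegativity, convexity and Lipschitz growth bounds on $E$ and $R$ together with the integrability of the posterior assumed after \eqref{post} (convexity plus integrability of $\exp(-E-\alpha R)$ forces at least exponential tail decay, hence finite first moment).

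The crux is to show that the linear coefficient vanishes, i.e. that the posterior-expected subgradient of the negative log-posterior is zero:
$$ \int_{\R^n} \big(\xi(u) + \alpha\,\zeta(u)\big)\, p(u|f)\,\dd u = 0. $$
The key observation is that $\nabla_u p(u|f) = -\,p(u|f)\,\big(\xi(u) + \alpha\,\zeta(u)\big)$ a.e., since $p(u|f) \propto \exp(-E(u;K,f) - \alpha R(u))$. Integrating this identity over a ball $\mathcal B_r(0)$ and applying the divergence theorem converts the left-hand side into a flux integral over $\partial\mathcal B_r(0)$, whose norm is bounded by $\int_{\partial\mathcal B_r(0)} p(u|f)\,\dd s$; by \eqref{eq:ass} this tends to $0$ as $r \to \infty$, yielding the claimed vanishing. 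With the linear term eliminated, $g(\hat u) = E(\hat u) + \alpha R(\hat u) + c(f)$, whose minimizer over $\hat u$ is, by the very definition \eqref{eq:um_uc}, the MAP estimate $\um$; convexity of $E + \alpha R$ ensures this is a global minimizer, which finishes the proof.

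I expect the main obstacle to be the rigorous execution of the third step: justifying the a.e. identity for $\nabla_u p(u|f)$ and the interchange of differentiation and integration when the subgradients are only defined almost everywhere, and making precise how the spherical decay in \eqref{eq:ass} controls the boundary flux in the divergence theorem on $\R^n$. The finiteness of $g$ and $c(f)$ is a secondary technical point, settled by the Lipschitz bounds and the posterior integrability noted above.
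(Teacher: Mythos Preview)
Your proposal is correct and rests on the same mechanism as the paper: both arguments reduce the Bayes cost to $E(\hat u)+\alpha R(\hat u)$ plus a linear term in $\hat u$, and both kill the linear coefficient $\int(\xi(u)+\alpha\zeta(u))\,p(u|f)\,\dd u$ by recognizing it as $-\int\nabla_u p(u|f)\,\dd u$ and applying the divergence theorem on balls together with the decay assumption \eqref{eq:ass}.

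The organizational difference is minor but worth noting. The paper first invokes the three-point identity $D_R(\hat u,u)=D_R(\hat u,\um)+D_R(\um,u)+\langle \phm-p,\hat u-\um\rangle$ (and its analogue for $E$), which makes $D_E(\hat u,\um)+\alpha D_R(\hat u,\um)$ appear explicitly and uses the MAP optimality condition $K^\top\qhm+\alpha\phm=0$ to cancel two of the remaining terms before the divergence-theorem step. Your route is more direct: you expand the Bregman distances straight from the definition, collect the $\hat u$-dependent part as $E(\hat u)+\alpha R(\hat u)$ minus a linear functional, and only invoke $\um$ at the very end as the minimizer of $E+\alpha R$. Your version avoids the three-point identity and the explicit optimality condition, at the cost of not isolating $D_E(\hat u,\um)+\alpha D_R(\hat u,\um)$ as the ``effective'' cost; the paper's version makes the MAP-centred structure (Lemma~\ref{lemmapost}) more visible. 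Substantively the two are equivalent.
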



\begin{proof}
Based on the definition in \eqref{eq:be}, $\um$ is a Bayes estimator for the cost function $C(\hat u,u)$, if it satisfaies
$$\um \in \arg \min_{\hat u} \int C( \hat  u, u)p(u|f)\ \dd u.$$ 
Using the elementary identity for the Bregman distance 
$$D_R(\hat u,u)=D_R(\hat u, \um)+ D_R(\um, u)+ \langle \phm- p, \hat u-\um \rangle \  \text{ with} \;p\in \partial R(u)$$
 and ignoring the terms independent on $\hat u$, we have
$$ \begin{aligned} \label{mapcost}
 C(\hat u, u)
 &=D_E(\hat u, \um)+\alpha  D_R(\hat u, \um)\\
 &\quad+\langle \qhm, K\hat u-K\um\rangle +\alpha \langle \phm, \hat u-\um \rangle  \\  &\quad - \langle q, K\hat u-Ku\rangle -\alpha \langle  p, \hat u-\um \rangle, \end{aligned}$$
where $\qhm\in\partial E(\um; K,f)$, $\phm\in\partial R(\um)$. It is obvious that $D_E(\hat u, \um)+\alpha D_R(\hat u, \um)$ reaches to minimal at $\hat u= \um$. Due to the optimality condition in \eqref{optcond}, the two terms in the second line vanish. Therefore, if we can show
\[
 \int\left( \langle q, K\hat u-Ku\rangle +\alpha \langle  p, \hat u-\um \rangle\right)p(u|f)\ \dd u=0,
\] 
we have proved that the MAP estimate is a Bayes estimator for $C(\hat u,u)$. 

Up to some terms independent on $\hat u$, we have
$$ \begin{aligned} \langle q, K\hat u-Ku\rangle -\alpha \langle  p, \hat u-\um \rangle&=\langle q,K\hat u\rangle  +\alpha \langle  p, \hat u \rangle \\
&=\langle K^\top q+ \alpha  p, \hat u \rangle \\
&=\langle \nabla_u \log p(u|f), \hat u \rangle.
\end{aligned}$$
Using the logarithmic derivative $\nabla_u p(u|f)=\left(\nabla_u \log p(u|f)\right) p(u|f)$, we obtain
$$\begin{aligned} 
 \int\left( \langle q, K\hat u-Ku\rangle +\alpha \langle  p, \hat u-\um \rangle\right)p(u|f)\ \dd u&=\int \langle \nabla_u \log p(u|f),  \hat u\rangle p(u|f)\ \dd u\\ &= \biggl\langle \int  \nabla_u  \log p(u|f) p(u|f)\ \dd u,  \hat u\biggr\rangle \\
&= \biggl\langle \int  \nabla_u  p(u|f) \ \dd u,  \hat u\biggr\rangle.
\end{aligned}$$
By the assumption \eqref{eq:ass}, we have
$$\begin{aligned} 
\biggl\|\int  \nabla_u  p(u|f)\  \dd u\biggr\| &=\lim_{r\to \infty}\biggl\|\int_{\mathcal{B}_r(0)}  \nabla_u ( p(u|f) ) \dd u\biggr\| \\
&=\lim_{r\to \infty}\biggl\|\int_{\partial \mathcal{B}_r(0)}   p(u|f) \cdot \mathbf{n}\; \dd s \biggr\| \\
&\leq \lim_{r\to \infty}\int_{\partial \mathcal{B}_r(0)}  p(u|f)  \dd s \\
&=0,
\end{aligned}$$
where the second equality follows the Gauss's theorem and $\mathbf{n}$ indicates the outward unit normal to the surface. 

%
\end{proof}

We finally mention that the cost function defined in \cite{burlu} for the case of additive Gaussian noise is indeed a special case of \eqref{mapcost}.
\section{Cost Functions for the CM Estimate}\label{Sec:CM}

The CM estimate has been proved as a Bayes estimator for the mean squared error \eqref{ep:mse} independently on the noise model. In this section, we will show that there exist other cost functions for which the CM estimate is Bayes estimator. The proposed cost functions are the sum of the Bregman distances of any convex functions, and they are independent on the prior probability density and the noise model as well.

\subsection{Optimality condition of the CM estimate}

In \cite[Sect. 3.3]{burlu}, under the additive Gaussian noise model it has been proved that the CM estimate fulfils an optimality condition ``on average'', i.e. with respect to the average gradient $\phc=\mathbb{E}[\partial R(u)]= \int p\; p(u|f)\ \dd u$ with $p\in\partial R(u)$, we have
\[
K^\top(K\uc-f)+\alpha\phc=0.
\]
In the following theory, we prove that under more general noise models the CM estimate does not always fulfill the optimality condition  ``on average'', and with some assumptions on the posterior, this optimality condition can be strictly positive. 

\begin{thm}\label{avpo}
Assume that the decay assumption \eqref{eq:ass} holds, the operator $K$ is positive, that $E$ is differentiable with respect to $u$, and the maps $u \mapsto \partial_{u_i} E(u; K, f)$, $i=1,\ldots,n$ are concave for every $u$. Then we have 
\begin{equation}\label{eq:CM_opt} 
\qhc +\alpha\phc \geq 0,
\end{equation}
where $\qhc \in \partial E(\uc; K, f)$. The equality in \eqref{eq:CM_opt} holds if and only if $q$ is linear or a constant with respect to $u$.
\end{thm}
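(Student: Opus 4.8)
The plan is to combine two ingredients: an \emph{averaged optimality condition} obtained exactly as in the proof of Theorem \ref{costmap}, and Jensen's inequality applied componentwise to the concave partial derivatives of $E$.

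First I would reproduce the integration-by-parts computation from Theorem \ref{costmap}. Writing the normalized posterior $p(u|f)\propto\exp(-E(u;K,f)-\alpha R(u))$, the logarithmic-derivative identity $\nabla_u p(u|f)=(\nabla_u\log p(u|f))\,p(u|f)$ together with $\nabla_u\log p(u|f)=-\nabla_u E(u;K,f)-\alpha p$, $p\in\partial R(u)$, gives
\[
\int \big(\nabla_u E(u;K,f) + \alpha p\big)\, p(u|f)\,\dd u = -\int \nabla_u p(u|f)\,\dd u .
\]
The decay assumption \eqref{eq:ass} and Gauss's theorem make the right-hand side vanish exactly as in the previous proof, so that
\[
\mathbb{E}\big[\nabla_u E(u;K,f)\big] + \alpha \phc = 0,
\]
with $\phc=\mathbb{E}[\partial R(u)]$. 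Note that in the Gaussian case $\nabla_u E=K^\top(Ku-f)$ is affine, so $\mathbb{E}[\nabla_u E]=\nabla_u E(\uc)$ and one recovers $K^\top(K\uc-f)+\alpha\phc=0$.

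Next I would rewrite the quantity of interest. Since $E$ is differentiable, $\qhc=\nabla_u E(\uc)$ has $i$-th component $\partial_{u_i}E(\uc)$; using $\uc=\mathbb{E}[u]$ and the averaged condition,
\[
\big(\qhc + \alpha\phc\big)_i = \partial_{u_i}E(\mathbb{E}[u]) - \mathbb{E}\big[\partial_{u_i}E(u)\big],\qquad i=1,\dots,n.
\]
Because each map $u\mapsto\partial_{u_i}E(u;K,f)$ is concave, Jensen's inequality for the probability measure $p(u|f)\,\dd u$ yields $\partial_{u_i}E(\mathbb{E}[u])\ge\mathbb{E}[\partial_{u_i}E(u)]$, i.e. every component of $\qhc+\alpha\phc$ is nonnegative, which is \eqref{eq:CM_opt}. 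Integrability of the partials follows from the Lipschitz continuity of $E$ (bounded gradient) and finiteness of the first posterior moment. I expect positivity of $K$ to be the structural hypothesis that makes the concavity of the $\partial_{u_i}E$ verifiable for the relevant noise models (e.g. the Poisson fidelity \eqref{eq:po}, where $\partial_{u_j}E$ is a constant minus a nonnegative combination of the convex terms $f_i/(Ku)_i$), rather than something entering the estimate itself.

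The delicate part is the equality statement. For a merely concave (not strictly concave) function, equality in Jensen's inequality forces affinity only on the convex hull of the support of the measure. The point I would argue carefully is that the posterior $p(u|f)=\tfrac1Z\exp(-E-\alpha R)$ is strictly positive on all of $\R^n$, so its support is full-dimensional; hence $\partial_{u_i}E(\mathbb{E}[u])=\mathbb{E}[\partial_{u_i}E(u)]$ for every $i$ forces each $\partial_{u_i}E$ to be affine on $\R^n$, i.e. $\nabla_u E$ is affine in $u$, which is precisely the statement that $q$ is linear or constant. Conversely, if $q$ is affine then $\mathbb{E}[\nabla_u E]=\nabla_u E(\uc)$ and equality holds throughout, giving the ``if and only if''. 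This equality characterization for non-strictly-concave maps is the step I would treat most carefully.
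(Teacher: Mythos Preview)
Your proof is correct and follows essentially the same route as the paper: Jensen's inequality applied to the concave partial derivatives of $E$, combined with the vanishing of $\int\nabla_u p(u|f)\,\dd u$ via Gauss's theorem and the decay assumption. The paper merely reverses the order, first bounding $\qhc+\alpha\phc$ below by the expected score via Jensen and then showing that expectation is zero, whereas you first establish $\mathbb{E}[\nabla_u E]+\alpha\phc=0$ and then compare with $\nabla_u E(\uc)$ componentwise.

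One minor difference worth noting: the paper routes Jensen through the variable $v=Ku$, writing $E(Ku;f)$ and applying the inequality to $\partial E(\cdot;f)$ before transferring back through $K^\top$, which is where the positivity of $K$ is used explicitly. Your argument applies Jensen directly in the $u$ variable to each $\partial_{u_i}E$, exactly matching the hypothesis as stated, so positivity of $K$ does not enter your estimate; your remark that it is a structural hypothesis enabling verification of the concavity assumption in concrete models (such as the Poisson fidelity) is well taken. Your treatment of the equality case, arguing full support of the log-concave posterior to force affinity on all of $\R^n$, is more careful than the paper's one-line invocation of the Jensen equality criterion.
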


\begin{proof}
Based on the forward model in \eqref{eq:forward}, the data fidelity term $E$ is composed of $Ku$ and $f$. For convenience, we also use the notation $E(Ku; f)$ instead of $E(u; K, f)$, when the operator $K$ plays an important rule.

According to the linearity of $K$ and the definition of $\uc$, we have
$$K^\top \partial E(K\uc; f)=K^\top \partial E(K \mathbb{E}[u]; f)=K^\top \partial E( \mathbb{E}[K u]; f)$$ 
By Jensen's inequality, if $\phi$ is concave we have $$\phi (\mathbb{E}(x))\geq \mathbb{E}(\phi(x)),$$ 
and the equality holds if and only if $\phi$ is a constant or a linear function.
Thus, by the positivity condition on the operator $K$, we have \begin{equation}\label{ave}
\begin{aligned}
K^\top \partial E(K\uc; f)+\alpha\phc&=K^\top \partial E( \mathbb{E}[K u]; f)+\alpha\phc\\
& \geq  K^\top \mathbb{E}[\partial E(Ku; f)]+  \alpha \int p\;p(u|f)\ \dd u \\
&= \mathbb{E}[K^\top \partial E(Ku; f)]+  \alpha \int p\;p(u|f)\ \dd u \\
&= \int K^\top \partial E(Ku; f)p(u|f)\ \dd u+  \alpha\int p\;p(u|f)\ \dd u \\
&= \int \nabla_u \log p(u|f) p(u|f)\ \dd u =\int \nabla_u p(u|f)\ \dd u =0.
\end{aligned}
\end{equation}
\end{proof}

Note that in this paper $K$ indicates the blurring operator, therefore the positivity condition is always satisfied.

\begin{example}
In this example, we consider the Poisson noise model. The corresponding data fidelity term is given in \eqref{eq:po}, which is convex, so Theorem \ref{avpo} can be apply to it. Based on the definition in \eqref{eq:po}, we have
\[
q=K^{\top} v \quad \mbox{ and }\quad v_{i}=1-\frac{f_{i}}{(Ku)_{i}}+g_{i}\in\mathbb{R}^{n},
\]
where $g\in\partial I_{\{i: (Ku)_{i}\geq0\}}(u)$ and $I_{\{i: (Ku)_{i}\geq0\}}(u)$ denotes the indicator function. Since $q$ is concave with respect to $u$, based on Theorem \ref{avpo} we can conclude that 
$$\qhc+\alpha\phc >0.$$

In \cite{burlu}, under the Gaussian noise model we have 
\[
q=K^{\top}(Ku-f),
\]
which is linear with respect to $u$, Therefore, based on Theorem \ref{avpo} the optimality condition ``on average'' is satisfied. 
\end{example}

\subsection{Cost function for the CM estimate}

The CM estimate has been proved as a Bayes estimator for the mean squared error \eqref{ep:mse} independently on the noise model. In the following theory, we prove that for any linear combinations of Bregman distances as cost functions the CM estimate is also a Bayes estimator.

\begin{thm}\label{costCM}
The CM estimate is a Bayes estimator for any cost function, which is a linear combination of Bregman distances of convex functions, i.e.
\begin{equation}\label{eq:costcm}
C_{CM}(\hat u, u)= \sum_{i=1}^N D_{F_i}^{\hat q_i}(u, \hat u),
\end{equation}
where $F_1, \dots, F_N$ are convex functions with finite expectation under $p(\cdot|f)$ and $\hat q_i \in \partial F_i(\hat u)$. 
\end{thm}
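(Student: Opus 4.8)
The plan is to show that the first-order optimality condition for minimizing the inner Bayes cost is satisfied precisely by $\hat u = \uc$, exploiting that each Bregman distance $D_{F_i}^{\hat q_i}(u,\hat u)$ is differentiable in its second argument $\hat u$ in a very clean way. Recall that for a convex differentiable $F_i$, we have $D_{F_i}^{\hat q_i}(u,\hat u) = F_i(u) - F_i(\hat u) - \langle \nabla F_i(\hat u), u - \hat u\rangle$, so that $\nabla_{\hat u} D_{F_i}^{\hat q_i}(u,\hat u) = -\nabla^2 F_i(\hat u)(u - \hat u)$, where the term $\nabla F_i(\hat u)$ coming from the two explicit $\hat u$-dependencies cancels, leaving only the Hessian acting on $(\hat u - u)$. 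The crucial point is that this gradient is \emph{linear} in $u$ for fixed $\hat u$, which is exactly what allows the expectation to pass through.

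Concretely, I would first write down the functional to be minimized over $\hat u$, namely $J(\hat u) := \int C_{CM}(\hat u, u)\, p(u|f)\,\dd u = \sum_{i=1}^N \int D_{F_i}^{\hat q_i}(u,\hat u)\, p(u|f)\,\dd u$. Next I would compute $\nabla_{\hat u} J(\hat u)$ by differentiating under the integral sign (justified by the finite-expectation hypothesis on the $F_i$, which controls the integrand and its $\hat u$-derivative). Using the gradient formula above, each summand contributes $\int -\nabla^2 F_i(\hat u)(u - \hat u)\, p(u|f)\,\dd u = -\nabla^2 F_i(\hat u)\bigl(\mathbb{E}[u|f] - \hat u\bigr) = -\nabla^2 F_i(\hat u)(\uc - \hat u)$, where the linearity in $u$ lets me pull the integral inside to produce $\mathbb{E}[u|f] = \uc$. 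Setting $\hat u = \uc$ makes the factor $(\uc - \hat u)$ vanish for every $i$ simultaneously, so the stationarity condition $\nabla_{\hat u} J(\uc) = 0$ holds. Since each $D_{F_i}(\cdot, \hat u)$ is convex in $\hat u$ — and hence $J$ is convex — stationarity is sufficient for $\uc$ to be a global minimizer, i.e.\ a Bayes estimator.

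One subtlety I would handle carefully is that the $F_i$ are only assumed convex, not necessarily twice differentiable, so the Hessian computation is heuristic; the clean way to argue is to note that $D_{F_i}^{\hat q_i}(u,\hat u) = F_i(u) - F_i(\hat u) - \langle \hat q_i, u - \hat u\rangle$ with $\hat q_i \in \partial F_i(\hat u)$, and that the subgradient-based argument can be made rigorous either by restricting to differentiable $F_i$ (as the paper's Lipschitz-continuity remark suggests, the subgradient is a singleton almost everywhere) or by a direct convexity comparison. Indeed, a cleaner route avoiding differentiation entirely is to evaluate the cost-difference $\int \bigl[C_{CM}(\hat u, u) - C_{CM}(\uc, u)\bigr] p(u|f)\,\dd u$ directly: the $F_i(u)$ terms cancel, and the remaining terms reorganize, after inserting $\uc = \mathbb{E}[u|f]$, into $\sum_i D_{F_i}(\uc, \hat u) \geq 0$, which shows $\uc$ is optimal without any regularity beyond convexity.

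The main obstacle is the interchange of differentiation (or integration) with the expectation and the justification that the stationary point is genuinely a minimizer; the finite-expectation assumption on the $F_i$ is what makes the integrals well-defined and the interchange legitimate, and the convexity of the Bregman distance in its second slot (listed among the properties recalled in Section~\ref{Sec:MAPCM}) is what upgrades the first-order condition to global optimality. The algebraic cancellation itself is straightforward once one observes the key structural fact that $\nabla_{\hat u} D_{F_i}(u,\hat u)$ is affine in $u$, which is the whole reason the CM estimate — rather than some more complicated functional of the posterior — emerges as the minimizer.
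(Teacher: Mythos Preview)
Your ``cleaner route'' at the end --- expanding $D_{F_i}^{\hat q_i}(u,\hat u)$, integrating against $p(u|f)$, and recognizing the result as $\sum_i D_{F_i}^{\hat q_i}(\uc,\hat u) + \text{const}$ --- is correct and is exactly the paper's proof. So the result is established.

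However, your \emph{primary} line of argument has a genuine gap. You write that ``each $D_{F_i}(\cdot,\hat u)$ is convex in $\hat u$ --- and hence $J$ is convex'', and later refer to ``the convexity of the Bregman distance in its second slot (listed among the properties recalled in Section~\ref{Sec:MAPCM})''. That is a misreading: the property recalled there is that $D_R(\hat u,u)$ is convex in its \emph{first} argument. The Bregman distance is in general \emph{not} convex in its second argument; e.g.\ for $F(x)=x^4$ one computes $\partial_{\hat u}^2 D_F(u,\hat u)=36\hat u^2-24\hat u\,u$, which is negative at $\hat u=1$, $u=2$. Consequently $J(\hat u)=\sum_i D_{F_i}(\uc,\hat u)+\text{const}$ need not be convex, and the stationarity condition $\nabla_{\hat u}J(\uc)=0$ you derive does not by itself yield global optimality. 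What actually pins down $\uc$ as the global minimizer is not convexity of $J$ but the nonnegativity of Bregman distances, $D_{F_i}(\uc,\hat u)\ge 0$ with equality at $\hat u=\uc$ --- precisely the content of the direct comparison you relegated to a parenthetical remark. So the differentiation detour is both technically heavier (requiring $C^2$ regularity you do not have) and logically incomplete, while the ``cleaner route'' is the whole proof.
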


\begin{proof}
Based on the definition of Bayes estimator, we need prove that 
$$\uc \in \arg \min_{\hat u} \int C_{CM}( \hat  u, u)p(u|f)\dd u.$$ 
By expanding the Bregman distance and collecting constant terms independent of $\hat u$, we have
$$ \begin{aligned}
\int  C_{CM}(\hat u, u) p(u|f)\  \dd u&= \int\left( \sum_{i=1}^N  D_{F_i}^{\hat q_i}(u,\hat u)\right) p(u|f)\ \dd u \\ &= 
\int \left[\sum_{i=1}^N  (F_i(u)- F_i(\hat u)- \langle \hat q_i, u-\hat u \rangle ) \right]p(u|f)\ \dd u  \\
 &=\sum_{i=1}^N   (- F_i(\hat u)- \langle \hat q_i, \uc-\hat u\rangle )+ \text{const} \\ 
 &= \sum_{i=1}^N  D_{F_i}^{\hat q_i}(\uc,\hat u) + \text{const}. \end{aligned}$$
 Obviously, it attains the minimal value at $\hat u= \uc$.
\end{proof}


In \cite{burlu}, it has been shown that the cost function for the CM estimate can be independent on the prior under the additive Gaussian noise model. In Theorem \ref{costCM} we give a much more general result. We have proved that the cost function for the CM estimate can be completely independent on the posterior density, i.e., independent on the noise model and also prior density. However, in Theorem \ref{costmap} we have shown that the cost function for the MAP estimate depends on the likelihood and the prior density. In the end of this section, we list a few examples of cost functions for $\uc$.

\begin{example}
From Theorem \ref{costCM}, we have the following functions that are suitable cost functions for the CM estimate, 
$$ \begin{aligned}
 C^1_{CM}( \hat  u, u) &= E(u; K, f)-E(\hat u; K, f) - \partial E(\hat u; K, f)(Ku-K\hat u)  = D_E(u,\hat u) \\ 
 C^2_{CM}( \hat  u, u) &=   D_R( u, \hat u) \\
  C^3_{CM}( \hat  u, u) &=  D_E(u,\hat u) + \alpha D_R( u, \hat u).
 \end{aligned}$$ 
 \end{example}
\begin{rem}
 Note that  $C^3_{CM}( \hat  u, u) $ resembles the form of the cost function for the MAP estimate in \eqref{cost}, but is different since the Bregman distance is not symmetric, except for the case of Gaussian posterior, when MAP and CM estimate coincide. 
 \end{rem}

 \section{Comparison of the CM and MAP Estimates}\label{Sec:comparison}
 
In \cite[Thm. 2]{burlu}, it has been shown that under the additive Gaussian noise model the CM estimate performs better than the MAP estimate when the error is measured in a quadratic distance, $\|L(u-\hat u)\|_2^2$ with any linear operator $L$. But if the error is measured by the Bregman distance $D_R(\hat u, u)$, the MAP estimate outperforms the CM estimate. In this section, based on the results in Theorem \ref{costCM}, we give another comparison result under more general noise models, for instance the Laplacian noise or the Poisson noise. 
 
 \begin{thm}\label{thm:fin}
The CM estimate outperforms the MAP estimate when the error is measured in the Bregman distance $D_R(u, \hat u)$, i.e. 
 $$\mathbb{E}[ D_R(u, \uc)]\leq \mathbb{E}[D_R(u, \um)].$$
 \end{thm}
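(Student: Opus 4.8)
The plan is to obtain Theorem~\ref{thm:fin} as an immediate corollary of Theorem~\ref{costCM}. The crucial point is that the error measure appearing here, $D_R(u,\hat u)$, has the estimate $\hat u$ in the second (base-point) slot of the Bregman distance, which is exactly the form of the cost functions covered by Theorem~\ref{costCM}. This is where the non-symmetry of the Bregman distance is decisive: the MAP-favorable comparison in \cite{burlu} measures the error as $D_R(\hat u,u)$, with the roles of the two arguments reversed, so the two results are not in contradiction but rather reflect the asymmetry of $D_R$.

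First I would apply Theorem~\ref{costCM} with $N=1$ and $F_1=R$, so that the cost functional is $C_{CM}(\hat u,u)=D_R^{\hat q}(u,\hat u)$ with $\hat q\in\partial R(\hat u)$. This requires only that $R$ have finite conditional expectation under $p(\cdot|f)$, which follows from the standing convexity and Lipschitz-continuity assumptions on $R$. The conclusion of Theorem~\ref{costCM} is then precisely that $\uc$ minimizes the conditional Bayes cost, i.e.
$$\uc \in \argmin_{\hat u}\int D_R(u,\hat u)\,p(u|f)\ \dd u.$$

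Next I would use this minimality directly. Since the MAP estimate $\um$ is itself an admissible estimator (a function of $f$), substituting it into the objective minimized by $\uc$ gives, for every fixed $f$, the conditional inequality
$$\int D_R(u,\uc)\,p(u|f)\ \dd u \ \le\ \int D_R(u,\um)\,p(u|f)\ \dd u.$$
Integrating against the marginal $p(f)\,\dd f$ and using the decomposition of the Bayes cost $\mathbb{E}[\,\cdot\,]=\int\!\int(\cdot)\,p(u|f)\,\dd u\,p(f)\,\dd f$ then yields $\mathbb{E}[D_R(u,\uc)]\le\mathbb{E}[D_R(u,\um)]$, as claimed.

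Because the comparison reduces to the defining optimality property of $\uc$, there is no real obstacle in the final step; the only care needed concerns the hypotheses inherited from Theorem~\ref{costCM}, namely the finiteness of $\mathbb{E}[R(u)\mid f]$ (so that the conditional minimization is well posed) and the existence of a subgradient $\hat q\in\partial R(\hat u)$ at the base point, both guaranteed by the global convexity and Lipschitz-continuity assumptions on $R$. I would also remark explicitly, to forestall confusion, that the reversed-argument result favoring the MAP estimate is recovered by the same reasoning applied to the MAP-centred form \eqref{newpost} rather than to the CM optimality, underscoring that the winner depends entirely on which argument of $D_R$ carries the estimate.
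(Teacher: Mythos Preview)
Your proposal is correct and follows exactly the paper's argument: the paper simply notes that the inequality ``directly follows from the fact that $\uc$ is a Bayes estimator for $C^2_{CM}(\hat{u}, u)$,'' which is precisely your application of Theorem~\ref{costCM} with $N=1$ and $F_1=R$. Your additional remarks on the hypotheses and on the contrast with the reversed-argument result in \cite{burlu} are helpful elaborations but do not depart from the paper's line.
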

  
  This inequality directly follows from the fact that $\uc$ is a Bayes estimator for $C^2_{CM}(\hat{u}, u)$. 
 Note that the above result is exactly the opposite of Theorem 2 in \cite{burlu} under the additive Gaussian noise model, but with flipped $\hat u$ and $u$ in the definition of cost function.

\section{Conclusions}\label{Sec:con}
In this paper, based on image restoration problem with the more general noise models instead of additive Gaussian noise, we study the two typical point estimators for the posterior probability density: the conditional mean (CM) estimate and the maximum a posteriori (MAP) estimate. The only assumption that we need is that the considered noise model has to lead to a convex data fidelity term. Based on the Bregman distance, we propose new cost functions for which the MAP and the CM estimate are Bayes estimators. Further, we give a new comparison result on these two estimates. In addition, we give the posterior in a MAP-centred form and study the optimality condition on average of the CM estimate.

\end{document}